\newtheorem{theorem}{Theorem}
\newtheorem{lemma}[theorem]{Lemma}
\newtheorem{remark}[theorem]{Remark}
\begin{document}

\title{A note on the space-time variational formulation for the wave equation with source term in $L^2(Q)$}

\author{Marco Zank}
\date{
      Fakultät für Mathematik, Universität Wien, Oskar--Morgenstern-Platz 1, 1090 Wien, Austria, \\[1mm] 
      {\tt marco.zank@univie.ac.at}
}

\maketitle

\begin{abstract}
    We derive a variational formulation for the scalar wave equation in the second-order formulation on bounded Lipschitz domains and homogeneous initial conditions. We investigate a variational framework in a bounded space-time cylinder~$Q$ with a new solution space and the test space $L^2(Q)$ for source terms in $L^2(Q)$. Using existence and uniqueness results in $H^1(Q)$, we prove that this variational setting fits the inf-sup theory, including an isomorphism as solution operator. Moreover, we show that the new solution space is not a subspace of $H^2(Q)$. This new uniqueness and solvability result is not only crucial for discretizations using space-time methods, including least-squares approaches, but also important for regularity results and the analysis of related space-time boundary integral equations, which form the basis for space-time boundary element methods.

    \bigskip

    \noindent \textbf{Keywords:} wave equation $\cdot$ space-time method $\cdot$ variational formulation~$\cdot$ inf-sup conditions
\end{abstract}

\section{Introduction}

The solutions to hyperbolic evolution problems are essential in many physical areas. As these solutions are often accessible only as approximations, different numerical schemes and their analysis are available. The most popular approaches are based on time-stepping schemes and spatial discretizations, e.g., finite element methods or finite difference methods. As time-stepping approaches are of pointwise-in-time-type, these methods are not based on a variational formulation in both, the spatial variable $x$ and the temporal variable $t$. In recent years, approaches using also a variational setting with respect to $t$, have been developed, which we call space-time methods, see \cite{Zlotnik1994} for temporal first-order and second-order formulations. Not only for these methods but also in general, the underlying space-time functional framework used for hyperbolic evolution equations is crucial for, e.g., regularity results or the analysis of related boundary integral equations or any conforming discretization method. The model problem is the scalar wave equation in second-order formulation
\begin{equation} \label{zank:einf:WellePDG}
    \left. \begin{array}{rcll}
    c^{-2} \partial_{tt} u - \Delta_x u & = & f  & \mbox{in} \;
    Q = \Omega \times (0,T), \\[1mm]
    u & = & 0 & \mbox{on} \; \Sigma = \partial\Omega \times [0,T],
    \\[1mm] 
    \partial_t u(\cdot,0) = u(\cdot,0) & = & 0 & \mbox{on} \; \Omega
    \end{array} \right\}
\end{equation}
with a given source term $f$ and the lateral boundary $\Sigma = \partial \Omega \times [0,T]$ of the space-time cylinder $Q = \Omega \times (0,T)$, where $\Omega  \subset \mathbb{R}^d$, $d \in \{1,2,3\}$, is a bounded Lipschitz domain, the constant wave speed $c$ is set to 1 due to rescaling and $T>0$. Many space-time approaches introduce additional unknowns, which represent first-order derivatives, see \cite{Zlotnik1994, FerrariPerugiaZampa2025, FuehrerGonzalezKarkulik2023, Gomez2025, KoecherBause2014}. Other space-time variational settings in \cite{BignardiMoiola2025} require
a star-shaped spatial domain $\Omega$ with additional assumptions on the boundary,
and besides this, the analysis of the ultraweak variational formulations of \cite{HenningPalittaSimonciniUrban2022} are strongly based on isomorphisms between function spaces, which are pointwise in time. Alternatively, we recall the variational setting in $H^1(Q)$, analyzed in \cite{Ladyzhenskaya1985, SteinbachZank2019Stabilized, Zlotnik1994}, by introducing the space-time Sobolev spaces $H^{1,1}_{0;0,}(Q)$, satisfying the homogeneous initial condition for $t=0$ and the homogeneous Dirichlet boundary condition on $\Sigma$, and $H^{1,1}_{0;,0}(Q)$, fulfilling the homogeneous terminal condition for $t=T$ and the homogeneous Dirichlet boundary condition on $\Sigma$, see Section~\ref{zank:sec:FS}. With these spaces, we set the continuous bilinear form
\begin{equation*}
  a_{H^1}(\cdot,\cdot) \colon \, H^{1,1}_{0;0,}(Q) \times H^{1,1}_{0;,0}(Q) \to \mathbb{R}
\end{equation*}  
by
\begin{equation} \label{zank:einf:aH1}
  a_{H^1}(v,w) = \int_0^T \int_\Omega \left( - \partial_t v(x,t) \partial_t w(x,t)  + \nabla_x v(x,t) \cdot \nabla_x w(x,t) \right) \mathrm dx \mathrm dt.
\end{equation}
Then, the space-time variational formulation of finding $u \in H^{1,1}_{0;0,}(Q)$ such that
\begin{equation} \label{zank:einf:VFH1}
  \forall w \in H^{1,1}_{0;,0}(Q): \quad a_{H^1}(u,w) = \int_0^T \int_\Omega f(x,t) w(x,t) \mathrm dx \mathrm dt
\end{equation}
has a unique solution for each source term $f \in L^2(Q)$, fulfilling the stability estimate
\begin{equation} \label{zank:einf:stab}
  \| u \|_{H^{1,1}_{0;0,}(Q)} = \sqrt{ \| \partial_t u \|_{L^2(Q)}^2 + \| \nabla_x u \|_{L^2(Q)^d}^2 } \leq \frac{1}{\sqrt{2}} T \| f \|_{L^2(Q)}
\end{equation}
and the regularities
    $u \in  C([0,T]; H^1_0(\Omega))$ and $\partial_{t} u \in C([0,T]; L^2(\Omega)).$
The variational formulation~\eqref{zank:einf:VFH1} is successfully used in the discretization schemes 
\cite{Fraschini2024, LoescherSteinbachZankDD, SteinbachZank2019Stabilized, Zank2021ECCOMAS, Zank2021Enumath, Zlotnik1994}.
However, a natural question is whether the bilinear form $a_{H^1}(\cdot,\cdot)$ in \eqref{zank:einf:aH1} fulfills the inf-sup theorem~\cite[Thm.~25.9]{ErnGuermondII2021}.
It turns out that the solution operator~$\mathcal L_{H^1} \colon \, L^2(Q) \to H^{1,1}_{0;0,}(Q)$ to the space-time variational formulation~\eqref{zank:einf:VFH1}, defined by $\mathcal L_{H^1}f = u$, is not surjective. Therefore, $\mathcal L_{H^1}$ is not an isomorphism and the bilinear form  $a_{H^1}(\cdot,\cdot)$ in \eqref{zank:einf:aH1} does not fit to the inf-sup theorem~\cite[Thm.~25.9]{ErnGuermondII2021}. We summarize this result in the following theorem, see Section~\ref{zank:sec:FS} for the notation.

\begin{theorem}{\cite[Thm.~1.1]{SteinbachZank2022VF}}
  \label{zank:einf:thm:InfSupH1}
  There does not exist a constant $C>0$ such that each source term
  $f \in L^2(Q)$ and the corresponding solution $u \in H^{1,1}_{0;0,}(Q)$
  of \eqref{zank:einf:VFH1} satisfy
  \begin{equation*}
    \| u \|_{H^{1,1}_{0;0,}(Q)} \, \leq \, C \, \| f \|_{[H^{1,1}_{0;\,,0}(Q)]'} .
  \end{equation*}
  In particular, the inf-sup condition
  \begin{equation*}
    \forall v \in H^{1,1}_{0;0,}(Q): \quad c_S \, \| v \|_{H^{1,1}_{0;0,}(Q)} \, \leq \,
    \sup_{0 \neq w \in H^{1,1}_{0;\,,0}(Q)}
    \frac{| a_{H^1}(v,w) |}{\| w \|_{H^{1,1}_{0;,0}(Q)}}
    \end{equation*}
    with a constant $c_S>0$ does not hold true.
\end{theorem}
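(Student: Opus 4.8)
The plan is to disprove the inf-sup condition by constructing an explicit sequence of functions that violates it — that is, a sequence $(v_k) \subset H^{1,1}_{0;0,}(Q)$ for which the quotient $\sup_{w} |a_{H^1}(v_k,w)|/\|w\|$ divided by $\|v_k\|$ tends to zero. Equivalently, since the two displayed claims are linked (the first is the statement that the solution operator is not bounded below in the stated norms, the second is the inf-sup failure), the cleanest route is to exhibit source terms $f_k \in L^2(Q)$ whose solutions $u_k$ have $\|u_k\|_{H^{1,1}_{0;0,}(Q)} / \|f_k\|_{[H^{1,1}_{0;,0}(Q)]'} \to \infty$. The natural mechanism behind such a construction is separation of variables: solve the problem with a source of the form $f_k(x,t) = \phi_k(x)\, g_k(t)$, where $\phi_k$ are the $L^2(\Omega)$-orthonormal Dirichlet eigenfunctions of $-\Delta_x$ with eigenvalues $\mu_k$, reducing \eqref{zank:einf:WellePDG} to a family of scalar ODEs in time.

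\medskip

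The key steps, in order, would be as follows. First I would diagonalize in space: writing $u = \sum_k U_k(t)\phi_k(x)$ and $f = \sum_k F_k(t)\phi_k(x)$, the PDE decouples into $U_k''(t) + \mu_k U_k(t) = F_k(t)$ with $U_k(0) = U_k'(0) = 0$. Second, I would express the relevant norms spectrally: $\|u\|_{H^{1,1}_{0;0,}(Q)}^2 = \sum_k \bigl(\|U_k'\|_{L^2(0,T)}^2 + \mu_k \|U_k\|_{L^2(0,T)}^2\bigr)$, and I would identify the dual norm $\|f\|_{[H^{1,1}_{0;,0}(Q)]'}$ in terms of the temporal modes, which by the spectral decomposition reduces to summing the corresponding dual norms of the one-dimensional-in-time functionals against the temporal analogue space. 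Third, I would isolate a single spatial mode — take $f_k(x,t) = \phi_k(x) g(t)$ for a fixed temporal profile $g$ — so that everything reduces to the scalar ODE $U_k'' + \mu_k U_k = g$ with homogeneous initial data, and track the asymptotics as $\mu_k \to \infty$.

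\medskip

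The crux is then a one-dimensional resonance/oscillation estimate. For the scalar ODE the solution is $U_k(t) = \mu_k^{-1/2}\int_0^t \sin\!\bigl(\sqrt{\mu_k}(t-s)\bigr) g(s)\, \mathrm ds$, and the point is to compare the growth of the solution norm $\|U_k'\|_{L^2}^2 + \mu_k\|U_k\|_{L^2}^2$ against the temporal dual norm of $g$ as the frequency $\omega_k = \sqrt{\mu_k}$ grows. By choosing $g$ (or a frequency-tuned family $g_k$) so that its temporal frequency content resonates with $\omega_k$, the solution norm grows faster than the dual norm of the data, forcing the ratio to blow up. Concretely, a resonant forcing of the form $g(t)=\sin(\omega_k t)$ produces the secular term $U_k(t)\sim t\cos(\omega_k t)/(2\omega_k)$, whose $H^1_{0;0,}$-seminorm in time grows linearly in $T$ while the dual norm of $g$ against the terminal-condition test space stays controlled — and it is precisely this mismatch, aggregated or taken along the sequence $k\to\infty$, that contradicts any uniform constant $C$.

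\medskip

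I expect the main obstacle to be the careful characterization and estimation of the dual norm $\|f\|_{[H^{1,1}_{0;,0}(Q)]'}$: unlike the solution norm, the dual norm is defined by a supremum over the asymmetric test space $H^{1,1}_{0;,0}(Q)$ (homogeneous condition at $t=T$ rather than $t=0$), so the spectral reduction in time is not a clean Parseval identity but requires understanding the action of the inverse wave operator restricted to test functions vanishing at the terminal time. Getting a sharp enough \emph{lower} bound on the solution norm together with a matching \emph{upper} bound on this dual norm — sharp enough that the ratio provably diverges rather than merely failing to be obviously bounded — is where the real work lies, and it is exactly the asymmetry between the trial space (vanishing at $t=0$) and the test space (vanishing at $t=T$) that makes the operator fail to be bounded below and thus breaks the inf-sup condition.
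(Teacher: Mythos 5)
This paper does not actually prove Theorem~\ref{zank:einf:thm:InfSupH1}: it quotes it from \cite[Thm.~1.1]{SteinbachZank2022VF}, and the proof there rests on exactly the resonance mechanism you propose (a single spatial eigenmode, forcing tuned to the eigenfrequency $\sqrt{\mu_k}$, secular growth in time). So your construction is the right one and matches the source. However, as written your argument has one genuine hole, and it is precisely the step you defer as ``where the real work lies'': the upper bound on $\|f_k\|_{[H^{1,1}_{0;,0}(Q)]'}$. Without that bound nothing is proved. The good news is that this estimate is much easier than you suggest --- it requires no analysis of an inverse wave operator on the test space, only spatial orthogonality and a single integration by parts in time.

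Concretely, take $f_k(x,t)=\phi_k(x)\sin(\omega_k t)$ with $\omega_k=\sqrt{\mu_k}$, so that, as you computed, the solution is $u_k(x,t)=U_k(t)\phi_k(x)$ with $U_k(t)=\bigl(\sin(\omega_k t)-\omega_k t\cos(\omega_k t)\bigr)/(2\omega_k^2)$ and $U_k'(t)=\tfrac12\,t\sin(\omega_k t)$. For $w\in H^{1,1}_{0;,0}(Q)$ expand $w=\sum_j W_j(t)\phi_j(x)$; by $L^2(\Omega)$-orthonormality of the $\phi_j$ and $W_k(T)=0$,
\begin{equation*}
  \langle f_k,w\rangle_{L^2(Q)}=\int_0^T\sin(\omega_k t)\,W_k(t)\,\mathrm dt
  =\frac{W_k(0)}{\omega_k}+\frac{1}{\omega_k}\int_0^T\cos(\omega_k t)\,W_k'(t)\,\mathrm dt ,
\end{equation*}
and since $|W_k(0)|=\bigl|\int_0^T W_k'(t)\,\mathrm dt\bigr|\le\sqrt T\,\|W_k'\|_{L^2(0,T)}$ and $\|W_k'\|_{L^2(0,T)}\le\|w\|_{H^{1,1}_{0;,0}(Q)}$, this yields $\|f_k\|_{[H^{1,1}_{0;,0}(Q)]'}\le 2\sqrt T\,\omega_k^{-1}\to 0$. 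On the other side your bookkeeping needs a small correction: with this unit-amplitude forcing the solution norm does not grow, it stays bounded away from zero, namely $\|u_k\|_{H^{1,1}_{0;0,}(Q)}^2=\|U_k'\|_{L^2(0,T)}^2+\mu_k\|U_k\|_{L^2(0,T)}^2\to T^3/12$; the divergence of the quotient comes entirely from the decay of the dual norm, so $\|u_k\|_{H^{1,1}_{0;0,}(Q)}/\|f_k\|_{[H^{1,1}_{0;,0}(Q)]'}$ is bounded below by a positive multiple of $T\,\omega_k$ for large $k$, which rules out any constant $C$. The ``in particular'' part then follows as you indicate: if the inf-sup condition held with $c_S>0$, every solution of \eqref{zank:einf:VFH1} would satisfy $c_S\|u\|_{H^{1,1}_{0;0,}(Q)}\le\sup_{0\neq w}|a_{H^1}(u,w)|/\|w\|_{H^{1,1}_{0;,0}(Q)}=\|f\|_{[H^{1,1}_{0;,0}(Q)]'}$, i.e., the forbidden bound with $C=1/c_S$. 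One last item worth a line in a complete write-up: you must check that $u_k$ really is the variational solution for $f_k$, which follows by inserting $u_k$ into $a_{H^1}(\cdot,\cdot)$, integrating by parts in $t$ (using $U_k'(0)=0$ and $W_k(T)=0$), and invoking uniqueness for \eqref{zank:einf:VFH1}.
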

One possibility to derive a variational setting, which satisfies the inf-sup theorem~\cite[Thm.~25.9]{ErnGuermondII2021}, is to enlarge the solution space $H^{1,1}_{0;0,}(Q)$ of the variational formulation~\eqref{zank:einf:VFH1} to a Hilbert space $\mathcal H_{0;0,}(Q) \supset H^{1,1}_{0;0,}(Q)$
and to enlarge also the space of the source term~$f$ to $[H^{1,1}_{0;,0}(Q)]'$,
leading to an isomorphism~$\mathcal L_{\mathcal H_{0;0,}} \colon \, [H^{1,1}_{0;,0}(Q)]' \to \mathcal H_{0;0,}(Q)$ as solution operator, which is the main idea of \cite{SteinbachZank2022VF}. Another possibility is to shrink the solution space $H^{1,1}_{0;0,}(Q)$ of the variational formulation~\eqref{zank:einf:VFH1} to a Hilbert space $H^1_{0;0,}(Q;\square) \subset H^{1,1}_{0;0,}(Q)$, resulting in an isomorphism~$\mathcal L_{H^1_{0;0,}(Q;\square)} \colon \, L^2(Q) \to H^1_{0;0,}(Q;\square)$ as solution operator. The latter approach is the content of this work. We introduce the new solution space $H^1_{0;0,}(Q;\square) \subset H^{1,1}_{0;0,}(Q)$ and a related bilinear form $a(\cdot, \cdot) \colon \, H^1_{0;0,}(Q;\square) \times L^2(Q) \to \mathbb{R}$. Using this, we prove that $a(\cdot, \cdot)$ fits to the framework of the inf-sup theorem~\cite[Thm.~25.9]{ErnGuermondII2021}, which includes the unique solvability of the related space-time variational setting to the wave equation~\eqref{zank:einf:WellePDG} for each source term~$f \in L^2(Q)$. Further, we prove that the new solution space $H^1_{0;0,}(Q;\square)$ is not a subspace of $H^2(0,T;L^2(\Omega))$.

The rest of the paper is organized as follows: In Section~\ref{zank:sec:FS}, we summarize notations of function spaces. Section~\ref{zank:sec:VF} is the main part of this work, where we introduce the new space-time solution space and the new space-time variational formulation. We end this section by proving unique solvability with the help of the inf-sup theorem~\cite[Thm.~25.9]{ErnGuermondII2021}. In Section~\ref{zank:sec:Zum}, we give some conclusions.

\section{Functions spaces} \label{zank:sec:FS}

In this section, we introduce the used function spaces.

For a bounded Lipschitz domain $A \subset \mathbb{R}^n$, $n \in \mathbb{N}$, we denote by $C_0^\infty(A)$ the set of all real-valued functions with compact support in $A$, which are infinitely differentiable. Further, $\mathcal D(A)$ is the space of test functions when the set $C_0^\infty(A)$ is endowed with the standard locally convex topology. Further, we denote by $L^2(A)$ the usual Lebesgue space of real-valued functions with inner product $\langle \cdot, \cdot \rangle_{L^2(A)}$ and induced norm $\| \cdot \|_{L^2(A)}$. Similarly, we use the classical Sobolev spaces $H^k(A)$, $k \in \mathbb{N}$, and the subspace $H^1_0(A)$, satisfying homogeneous Dirichlet conditions, with dual space $[H^1_0(A)]'$. We extend these spaces to vector-valued versions, e.g., $L^2(A)^m$ for $m \in \mathbb{N}$. 
Further, for $A=(0,T)$, we consider the spaces $H^1_{0,}(0,T)$, satisfying the initial condition $v(0)=0$, and $H^1_{,0}(0,T)$, satisfying the terminal condition $v(T)=0$. Again, for $A=(0,T)$, we introduce the extensions to Bochner spaces $L^2(0,T; X)$ and $H^k(0,T; X)$, $k \in \mathbb{N}$, with a real Hilbert space $X$. Analogously, we use the space $C([0,T]; X)$ of continuous functions.
Moreover, for $A=Q$, we consider the Lebesgue space $L^2(Q)$, the Sobolev space $H^1(Q)$ and its closed subspaces
\begin{align*}
  H^{1,1}_{0;0,}(Q) =& H^1_{0,}(0,T;L^2(\Omega)) \cap L^2(0,T; H^1_0(\Omega)) \subset H^1(Q), \\
  H^{1,1}_{0;,0}(Q) =& H^1_{,0}(0,T;L^2(\Omega)) \cap L^2(0,T; H^1_0(\Omega)) \subset H^1(Q),
\end{align*}
which are endowed with the Hilbertian norm
\begin{equation*}
   \|w \|_{H^{1,1}_{0;0,}(Q)} =  \|w \|_{H^{1,1}_{0;,0}(Q)} = \sqrt{  \|\partial_t w \|_{L^2(Q)}^2 +  \|\nabla_x w \|_{L^2(Q)^d}^2 },
\end{equation*}
due to the Poincar\'{e} inequality. Finally, $X'$ is the dual space of a real Hilbert space~$X$.

\section{New variational formulation for source terms in $L^2(Q)$} \label{zank:sec:VF}

In this section, we prove the main theorems for the wave equation~\eqref{zank:einf:WellePDG}.

The variational formulation~\eqref{zank:einf:VFH1} leads to
\begin{equation*}
    \forall \varphi \in \mathcal D(Q): \quad \langle {u}, {\partial_{tt} \varphi} \rangle_{L^2(Q)} - \langle {u}, {\Delta_x \varphi} \rangle_{L^2(Q)} = \langle {f}, {\varphi} \rangle_{L^2(Q)}.
\end{equation*}
In other words, the distributional wave operator $\square = \partial_{tt} -\Delta_x$ applied to the solution~$u\in H^{1,1}_{0;0,}(Q)$ of the variational formulation~\eqref{zank:einf:VFH1} is given by the $L^2(Q)$ function~$f$, i.e., 
    $\square u = f \in L^2(Q).$
In particular, the solution~$u\in H^{1,1}_{0;0,}(Q)$ of variational formulation~\eqref{zank:einf:VFH1} fulfills $u \in H^1_{0;0,}(Q;\square)$. Here, we define the space
\begin{equation*}
  H^1_{0;0,}(Q;\square) = \{ v\in H^{1,1}_{0;0,}(Q):\, \square v\in L^2(Q)\, \text{ and } \, \partial_t v(\cdot,0)=0\},
\end{equation*}
where the condition $\partial_t v(\cdot,0)=0$ is well-defined due to $\partial_{t} v \in C([0,T]; [H^1_0(\Omega)]').$
The space $H^1_{0;0,}(Q;\square)$ is a Hilbert space with the inner product
\begin{equation*}
	\langle {v}, {w} \rangle_{H^1_{0;0,}(Q;\square)}= \langle {\partial_t v}, { \partial_t w} \rangle_{L^2(Q)} + \langle {\nabla_x v}, { \nabla_x w} \rangle_{L^2(Q)^d} + \langle {\square v}, {\square w} \rangle_{L^2(Q)}
\end{equation*}
and the induced norm $\| v \|_{H^1_{0;0,}(Q;\square)} = \sqrt{\langle v,v \rangle_{H^1_{0;0,}(Q;\square)}}.$

\begin{lemma}
  In $H^1_{0;0,}(Q;\square)$, the norms $\| \cdot \|_{H^1_{0;0,}(Q;\square)}$ and $\| \square(\cdot) \|_{L^2(Q)}$ are equivalent.
\end{lemma}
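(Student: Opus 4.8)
The plan is to prove the norm equivalence by establishing two inequalities. One direction is immediate: since $\|\square(\cdot)\|_{L^2(Q)}^2$ is one of the three summands defining $\langle\cdot,\cdot\rangle_{H^1_{0;0,}(Q;\square)}$, we trivially have $\|\square v\|_{L^2(Q)} \le \|v\|_{H^1_{0;0,}(Q;\square)}$ for all $v \in H^1_{0;0,}(Q;\square)$. The substance of the lemma is the reverse estimate, namely that $\|\partial_t v\|_{L^2(Q)}^2 + \|\nabla_x v\|_{L^2(Q)^d}^2$ can be controlled by $\|\square v\|_{L^2(Q)}^2$ alone, up to a constant.

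For the nontrivial direction, the key observation is that any $v \in H^1_{0;0,}(Q;\square)$ is precisely the solution $u \in H^{1,1}_{0;0,}(Q)$ of the variational formulation~\eqref{zank:einf:VFH1} with source term $f := \square v \in L^2(Q)$. This is exactly the identification made in the text preceding the space definition: testing with $\varphi \in \mathcal{D}(Q)$ shows $\square v = f$ in the distributional sense, and since $v$ lies in $H^{1,1}_{0;0,}(Q)$ and satisfies $\partial_t v(\cdot,0)=0$, it meets the homogeneous initial data, so $v$ is the unique solution associated to this $f$. First I would verify that $v$ indeed satisfies~\eqref{zank:einf:VFH1} with right-hand side $\square v$; the uniqueness from the $H^1(Q)$-theory guarantees $v = \mathcal{L}_{H^1}(\square v)$.

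Granting this, I would invoke the stability estimate~\eqref{zank:einf:stab}, which gives
\begin{equation*}
  \|\partial_t v\|_{L^2(Q)}^2 + \|\nabla_x v\|_{L^2(Q)^d}^2 = \|v\|_{H^{1,1}_{0;0,}(Q)}^2 \le \frac{1}{2}\,T^2\,\|\square v\|_{L^2(Q)}^2 .
\end{equation*}
Adding $\|\square v\|_{L^2(Q)}^2$ to both sides then yields
\begin{equation*}
  \|v\|_{H^1_{0;0,}(Q;\square)}^2 \le \left(1 + \tfrac{1}{2}T^2\right) \|\square v\|_{L^2(Q)}^2,
\end{equation*}
which, combined with the trivial direction, establishes the equivalence with explicit constants $1 \le \|v\|_{H^1_{0;0,}(Q;\square)}^2 / \|\square v\|_{L^2(Q)}^2 \le 1 + \tfrac{1}{2}T^2$.

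The only genuine obstacle is the justification that an arbitrary element of $H^1_{0;0,}(Q;\square)$ coincides with the variational solution for $f=\square v$; this is where the homogeneous initial conditions built into the space definition are essential, since the stability estimate~\eqref{zank:einf:stab} applies only to solutions satisfying both $v(\cdot,0)=0$ (encoded in $H^{1,1}_{0;0,}(Q)$) and $\partial_t v(\cdot,0)=0$ (the extra condition in the definition of $H^1_{0;0,}(Q;\square)$). Once this identification is in place, the rest is a direct application of the already-established $H^1$-theory, so I expect the proof to be short.
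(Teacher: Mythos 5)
Your proposal is correct and follows essentially the same route as the paper: the trivial bound in one direction, and for the reverse, identifying $v$ as the unique solution of the variational formulation~\eqref{zank:einf:VFH1} with source term $f=\square v$ and applying the stability estimate~\eqref{zank:einf:stab}, yielding the same constant $\sqrt{1+T^2/2}$. You are in fact slightly more careful than the paper in flagging that the identification $v=\mathcal L_{H^1}(\square v)$ needs justification (via the initial condition $\partial_t v(\cdot,0)=0$ and uniqueness), a step the paper simply asserts.
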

\begin{proof}
    Let $v \in H^1_{0;0,}(Q;\square)$ be given.
    First, we have that
    $\| \square v \|_{L^2(Q)} \leq \| v \|_{H^1_{0;0,}(Q;\square)}.$

    Second, consider $v$ as the unique solution of the variational formulation~\eqref{zank:einf:VFH1} with source term $f=\square v \in L^2(Q)$ and homogeneous initial and boundary conditions. Thus, the stability estimate~\eqref{zank:einf:stab}
    yields that
    \begin{equation*}
      \| v \|_{H^1_{0;0,}(Q;\square)} \leq \sqrt{T^2/2 + 1} \| \square v \|_{L^2(Q)}.
    \end{equation*}
\end{proof}

In the remainder, we equip the space $H^1_{0;0,}(Q;\square)$ with the inner product $\langle {\square (\cdot)}, {\square (\cdot)} \rangle_{L^2(Q)}$ and the induced norm $\| \square (\cdot)  \|_{L^2(Q)}$. With this, we consider the variational formulation to find $u \in H^1_{0;0,}(Q;\square)$ such that
\begin{equation} \label{zank:VF:VF_0}
    \forall w \in L^2(Q): \quad a(u,w) = \langle {f}, {w} \rangle_{L^2(Q)}
\end{equation}
with a given source term $f \in L^2(Q)$. Here, the bilinear form
\begin{equation*}
  a(\cdot,\cdot)\colon \, H^1_{0;0,}(Q;\square) \times L^2(Q) \to \mathbb{R}
\end{equation*}
is defined by
\begin{equation*}
  a(v,w) = \langle {\square v}, {w} \rangle_{L^2(Q)}
\end{equation*}
for $v \in H^1_{0;0,}(Q;\square) $ and $w \in L^2(Q).$ With this, we state the main theorem.

\begin{theorem}
    The bilinear form $a(\cdot,\cdot)\colon\, H^1_{0;0,}(Q;\square) \times L^2(Q) \to \mathbb{R}$ fulfills the inf-sup theorem~\cite[Thm.~25.9]{ErnGuermondII2021}, i.e., the following three conditions hold true:
    \begin{enumerate}
     \item the continuity or the so-called first condition:
      \begin{equation*}
          \forall v \in H^1_{0;0,}(Q;\square), \, w \in L^2(Q): \quad |a(v,w)| \leq \| \square v \|_{L^2(Q)} \| w \|_{L^2(Q)},
      \end{equation*}
     \item the inf-sup condition or the so-called second condition:
      \begin{equation*}
          \forall v \in H^1_{0;0,}(Q;\square):  \sup_{0 \neq w \in L^2(Q)} \frac{|a(v,w)|}{\| w \|_{L^2(Q)}} = \| \square v \|_{L^2(Q)},
      \end{equation*}
     \item  the so-called third condition:
      \begin{equation*}
          \forall w \in L^2(Q)\setminus \{0\} \colon \exists v \in H^1_{0;0,}(Q;\square) \colon \quad a(v,w) \neq 0.
      \end{equation*}
    \end{enumerate}
    In particular, the variational formulation~\eqref{zank:VF:VF_0} has a unique solution $u \in H^1_{0;0,}(Q;\square)$ fulfilling the stability
    \begin{equation*}
        \| \square u \|_{L^2(Q)} = \| f \|_{L^2(Q)}.
    \end{equation*}
\end{theorem}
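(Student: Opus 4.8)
The plan is to verify the three hypotheses of the inf-sup theorem directly, one at a time, and then to invoke the abstract result \cite[Thm.~25.9]{ErnGuermondII2021} to obtain unique solvability, reading off the stability from the second condition. The key structural feature I would exploit throughout is that the norm on $H^1_{0;0,}(Q;\square)$ has been chosen to be exactly $\| \square(\cdot) \|_{L^2(Q)}$, so that the operator $\square$ acts as an isometry onto its range.

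First, the continuity (the first condition) is immediate from Cauchy--Schwarz: since $a(v,w) = \langle \square v, w \rangle_{L^2(Q)}$, one has $|a(v,w)| \leq \| \square v \|_{L^2(Q)} \| w \|_{L^2(Q)}$, which is continuity with constant $1$. For the inf-sup condition (the second condition), the ``$\leq$'' estimate is just the first condition, so only the reverse bound remains. Here I would use that $\square v$ is itself an admissible test function, because $\square v \in L^2(Q)$ by the very definition of $H^1_{0;0,}(Q;\square)$. Choosing $w = \square v$ (and noting that both sides vanish when $\square v = 0$) gives $|a(v,\square v)| / \| \square v \|_{L^2(Q)} = \| \square v \|_{L^2(Q)}$, so the supremum equals $\| \square v \|_{L^2(Q)}$ with inf-sup constant exactly $1$.

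The third condition is the only step requiring genuine input, and I expect it to be the main obstacle in any from-scratch approach; the construction is designed precisely so that this step is inherited from the classical theory. Given $w \in L^2(Q) \setminus \{0\}$, I would set $f = w$ and invoke the existence result for the $H^1$-variational formulation~\eqref{zank:einf:VFH1}, which produces a unique $u \in H^{1,1}_{0;0,}(Q)$. By the discussion preceding the statement, this solution satisfies $\square u = w \in L^2(Q)$ together with $\partial_t u(\cdot,0) = 0$, so in fact $u \in H^1_{0;0,}(Q;\square)$. Consequently $a(u,w) = \langle \square u, w \rangle_{L^2(Q)} = \| w \|_{L^2(Q)}^2 > 0$, which establishes the third condition. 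In effect, this step encodes the surjectivity of $\square \colon H^1_{0;0,}(Q;\square) \to L^2(Q)$, and it is exactly this surjectivity that the known $H^1$-theory supplies.

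Finally, with all three conditions verified, the inf-sup theorem yields a unique solution $u \in H^1_{0;0,}(Q;\square)$ of~\eqref{zank:VF:VF_0}. For the stability I would read the variational identity pointwise in the full test space: since $\langle \square u, w \rangle_{L^2(Q)} = \langle f, w \rangle_{L^2(Q)}$ holds for every $w \in L^2(Q)$, I conclude $\square u = f$ in $L^2(Q)$, whence $\| \square u \|_{L^2(Q)} = \| f \|_{L^2(Q)}$ with equality, consistent with the inf-sup constant $1$ from the second condition. The conceptual point worth emphasizing is that the hard analytic work is entirely hidden in the third condition, where the well-posedness of the enlarged problem is bootstrapped from the $H^1$ existence result, while the isometry property makes the remaining two conditions essentially trivial.
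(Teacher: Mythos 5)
Your proposal is correct and follows essentially the same route as the paper: Cauchy--Schwarz for continuity, the maximizing test function $w = \square v$ (equivalently, the dual representation of the $L^2(Q)$ norm used in the paper) for the inf-sup condition, and---the key step---invoking the $H^1$ well-posedness of~\eqref{zank:einf:VFH1} with $f=w$ to produce $v \in H^1_{0;0,}(Q;\square)$ with $\square v = w$ for the third condition. Your derivation of the stability identity via $\square u = f$ in $L^2(Q)$ is a trivially equivalent variant of the paper's argument.
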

\begin{proof}
    The first assertion is proven by the Cauchy--Schwarz inequality.
    
    The second assertion follows from a representation of the $L^2(Q)$ norm, i.e., 
    \begin{equation*}
        \| \square v \|_{L^2(Q)} = \sup_{0 \neq w \in L^2(Q)} \frac{|\langle \square v,w \rangle_{L^2(Q)} |}{\| w \|_{L^2(Q)}} = \sup_{0 \neq w \in L^2(Q)} \frac{|a(v,w)|}{\| w \|_{L^2(Q)}}
    \end{equation*}
    for $v \in H^1_{0;0,}(Q;\square).$
    
    We prove the third assertion: For given $0 \neq w \in L^2(Q)$, let $v\in H^{1,1}_{0;0,}(Q) \subset L^2(Q)$ be the solution of the variational formulation~\eqref{zank:einf:VFH1} for $f=w \in L^2(Q)$. This solution~$v$ fulfills $H^1_{0;0,}(Q;\square)$ and $\square v = w.$ Plugging this into the bilinear form $a(\cdot,\cdot)$ leads to
    \begin{equation*}
	a(v,w) = \langle {\square v}, {w} \rangle_{L^2(Q)} = \| w \|_{L^2(Q)}^2 \neq 0,
    \end{equation*}
    which yields the third assertion.
    
    The unique solvability follows by the inf-sup theorem~\cite[Thm.~25.9]{ErnGuermondII2021}. The stability is proven by
    \begin{equation*}
         \| \square u \|_{L^2(Q)} = \sup_{0 \neq w \in L^2(Q)} \frac{|a(u,w)|}{\| w \|_{L^2(Q)}} = \sup_{0 \neq w \in L^2(Q)} \frac{|\langle f,w \rangle_{L^2(Q)} |}{\| w \|_{L^2(Q)}} =  \| f \|_{L^2(Q)},
    \end{equation*}
    using again the representation of the $L^2(Q)$ norm.
\end{proof}

\begin{remark}
  Inhomogeneous boundary and initial conditions can be treated by superposition. Precise regularity assumptions on the data are left for future work. 
\end{remark}

Last, we show that $H^1_{0;0,}(Q;\square)$ is not a subspace of $H^2(0,T;L^2(\Omega))$. For this purpose, for $j \in \mathbb{N}$, let $\phi_j \in H^1_0(\Omega)$ be the eigenfunctions and $\mu_j> 0$ be the related eigenvalues, fulfilling the eigenvalue problem
\begin{equation*}
    -\Delta_x \phi_j = \mu_j \phi_j \text{ in } \Omega, \quad \phi_{j|\partial \Omega} = 0, \quad \| \phi_j \|_{L^2(\Omega)} = 1.
\end{equation*}
We have that $0 < \mu_1 \leq \mu_2 \leq \dots$ and $\mu_j \to \infty$ as $j \to \infty$. Moreover, Weyl’s asymptotic formula states that there exist constants $c_1>0$ and $c_2>0$ depending on $\Omega \subset \mathbb{R}^d$ and $d$ such that
\begin{equation} \label{zank:VF:Weyl}
    c_1 j^{2/d} \leq \mu_j \leq c_2 j^{2/d}
\end{equation}
holds true for $j \in \mathbb{N}$ sufficiently large, see \cite[Thm.~12.14]{Schmuedgen2012} and its proof. The set $\{ \phi_j : j \in \mathbb{N}\}$ is an orthonormal basis of $L^2(\Omega)$ and an orthogonal basis of $H^1_0(\Omega)$.

\begin{theorem}
  The condition
  \begin{equation*}
    H^1_{0;0,}(Q;\square) \not\subset H^2(0,T;L^2(\Omega))
  \end{equation*}
  holds true, and thus, $H^1_{0;0,}(Q;\square) \not\subset H^2(Q)$.
\end{theorem}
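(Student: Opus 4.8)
The plan is to exhibit a single explicit function in $H^1_{0;0,}(Q;\square)$ that fails to lie in $H^2(0,T;L^2(\Omega))$, built from a \emph{resonant} separation-of-variables ansatz in the eigenbasis $\{\phi_j\}$. For weights $a_j$ to be chosen later, I would take the source
\[
  f(x,t)=\sum_{j\in\N}a_j\sin(\sqrt{\mu_j}\,t)\,\phi_j(x),
\]
and let $v$ be the solution of the variational formulation~\eqref{zank:einf:VFH1} for this $f$. As soon as $\sum_j a_j^2<\infty$ one has $f\in L^2(Q)$, so by the $H^1$ existence theory $v\in H^{1,1}_{0;0,}(Q)$ with $\square v=f$ and $\partial_t v(\cdot,0)=0$; hence $v\in H^1_{0;0,}(Q;\square)$ automatically. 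The whole point is then to tune $a_j$ so that $f\in L^2(Q)$ but $v\notin H^2(0,T;L^2(\Omega))$.

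Next I would decouple the problem in the spectral basis. Testing~\eqref{zank:einf:VFH1} with $w=\phi_j\,\eta$ for $\eta\in H^1_{,0}(0,T)$ and using $\langle\nabla_x v,\nabla_x\phi_j\rangle_{L^2(\Omega)^d}=\mu_j\langle v,\phi_j\rangle_{L^2(\Omega)}$ shows that the Fourier coefficient $v_j(t)=\langle v(\cdot,t),\phi_j\rangle_{L^2(\Omega)}$ solves the resonantly forced oscillator
\[
  v_j''(t)+\mu_j v_j(t)=a_j\sin(\sqrt{\mu_j}\,t),\qquad v_j(0)=v_j'(0)=0,
\]
whose unique solution carries a secular term,
\[
  v_j(t)=a_j\Bigl(-\tfrac{t}{2\sqrt{\mu_j}}\cos(\sqrt{\mu_j}\,t)+\tfrac{1}{2\mu_j}\sin(\sqrt{\mu_j}\,t)\Bigr).
\]
Since $\{\phi_j\}$ is an orthonormal basis of $L^2(\Omega)$, Parseval yields the equivalence $v\in H^2(0,T;L^2(\Omega))$ if and only if $\sum_j\|v_j''\|_{L^2(0,T)}^2<\infty$, which reduces the statement to a one-dimensional estimate per mode.

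Then I would estimate each mode. Differentiating twice, $v_j''$ is dominated by its secular part $\tfrac{a_j\sqrt{\mu_j}}{2}\,t\cos(\sqrt{\mu_j}\,t)$; using $\int_0^T t^2\cos^2(\sqrt{\mu_j}\,t)\,\mathrm dt\to T^3/6$ as $j\to\infty$ together with the reverse triangle inequality to absorb the lower-order terms gives a lower bound $\|v_j''\|_{L^2(0,T)}^2\geq c\,a_j^2\mu_j$ for all large $j$, with $c=c(T)>0$. Meanwhile $\|f\|_{L^2(Q)}^2=\sum_j a_j^2\,\|\sin(\sqrt{\mu_j}\,\cdot)\|_{L^2(0,T)}^2$ is comparable to $\sum_j a_j^2$. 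Choosing $a_j^2=(\mu_j\,j)^{-1}$, Weyl's asymptotics~\eqref{zank:VF:Weyl} give $\sum_j a_j^2\leq C\sum_j j^{-1-2/d}<\infty$, so indeed $f\in L^2(Q)$ and $v\in H^1_{0;0,}(Q;\square)$, while $\sum_j a_j^2\mu_j=\sum_j 1/j=\infty$, whence $\sum_j\|v_j''\|_{L^2(0,T)}^2=\infty$ and $v\notin H^2(0,T;L^2(\Omega))$. Finally, since every $H^2(Q)$ function has $\partial_{tt}(\cdot)\in L^2(Q)$, one has $H^2(Q)\subset H^2(0,T;L^2(\Omega))$, so the same $v$ also witnesses $H^1_{0;0,}(Q;\square)\not\subset H^2(Q)$.

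The main obstacle is the uniform-in-$j$ lower bound $\|v_j''\|_{L^2(0,T)}^2\geq c\,a_j^2\mu_j$: the point is that $v_j''$ must be bounded \emph{below}, not above, so I must keep the oscillatory integral $\int_0^T t^2\cos^2(\sqrt{\mu_j}\,t)\,\mathrm dt$ bounded away from zero uniformly for large $j$ and control the non-secular contributions to $v_j''$ from below via the reverse triangle inequality. A secondary, more routine point is to justify rigorously the modal decoupling for the variational solution and the Parseval characterization of $H^2(0,T;L^2(\Omega))$.
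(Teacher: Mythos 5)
Your proposal is correct and is essentially the paper's own argument: both constructions are the same resonant counterexample in the eigenbasis $\{\phi_j\}$, with per-mode secular solutions proportional to $\int_0^t s\sin(\sqrt{\mu_j}\,s)\,\mathrm{d}s$ and weights that agree up to constants (the paper's $\mu_j^{-d/4-1/2}$ is comparable to your $(\mu_j\, j)^{-1/2}$ by Weyl's asymptotics), so that $\square u\in L^2(Q)$ while $\sum_j\|v_j''\|_{L^2(0,T)}^2$ diverges like the harmonic series. The only difference is organizational rather than mathematical: you prescribe the source $f$ and invoke the $H^1(Q)$ well-posedness theory plus modal decoupling to identify the solution and get membership in $H^1_{0;0,}(Q;\square)$ for free, whereas the paper writes down $u$ explicitly and verifies $u\in H^1_{0;0,}(Q;\square)$ and $\partial_{tt}u\notin L^2(Q)$ by direct Fourier norm computations; both routes are valid.
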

\begin{proof}
    We define
    \begin{equation*}
        u(x,t) = \sum_{j=1}^\infty \phi_j(x) \mu_j^{-d/4-1/2} \int_0^t s \sin (\sqrt{\mu_j} s) \, \mathrm{d}s = \sum_{j=1}^\infty \phi_j(x) \mu_j^{-d/4-1/2} v_j(t)
    \end{equation*}
    for $(x,t) \in Q$ with $v_j(t) = \int_0^t s \sin (\sqrt{\mu_j} s) \, \mathrm{d}s$
    and the partial sums
    \begin{equation*}
        u_M(x,t) = \sum_{j=1}^M \phi_j(x) \mu_j^{-d/4-1/2} \int_0^t s \sin (\sqrt{\mu_j} s) \, \mathrm{d}s = \sum_{j=1}^M \phi_j(x) \mu_j^{-d/4-1/2} v_j(t)
    \end{equation*}
    for $(x,t) \in Q$ with $M \in \mathbb{N}$. First, we prove that $u \in H^1_{0;0,}(Q;\square)$ by using the same Fourier techniques as in \cite[Chap.~IV]{Ladyzhenskaya1985}, where we sketch the main steps only. We calculate $\| v_j \|_{L^2(0,T)}^2$ and $\| \partial_t v_j \|_{L^2(0,T)}^2$. Using Weyl’s asymptotic formula~\eqref{zank:VF:Weyl}, direct calculations yield
    \begin{equation*}
        \| v_j \|_{L^2(0,T)}^2 =  \frac{T^3}{6 \mu _j}+\frac{T^2 \sin \left(2 T \sqrt{\mu _j}\right)}{4 \mu _j^{3/2}}+\frac{T}{2 \mu _j^2} 
        -\frac{5 \sin \left(2 T \sqrt{\mu _j}\right)}{8 \mu _j^{5/2}}+\frac{3 T \cos \left(2 T \sqrt{\mu _j}\right)}{4 \mu _j^2} \in \mathcal O(j^{-2/d})
    \end{equation*}
    and
    \begin{equation*}
        \| \partial_t v_j \|_{L^2(0,T)}^2 =  -\frac{T^2 \sin \left(2 T \sqrt{\mu _j}\right)}{4 \sqrt{\mu _j}}+\frac{\sin \left(2 T \sqrt{\mu _j}\right)}{8 \mu _j^{3/2}}-\frac{T \cos \left(2 T \sqrt{\mu _j}\right)}{4 \mu _j}+\frac{T^3}{6} 
        \in \mathcal O(1)
    \end{equation*}
    as $j \to \infty$. These calculations, Weyl’s asymptotic formula~\eqref{zank:VF:Weyl} and the Fourier representations of the norms give that
    \begin{equation*}
        \| u \|_{L^2(Q)}^2 = \sum_{j=1}^\infty  \mu_j^{-d/2-1} \| v_j \|_{L^2(0,T)}^2 \leq C \sum_{j=1}^\infty j^{-1 -2/d -2/d} < \infty,
    \end{equation*}
    \begin{equation*}
        \| \partial_t u \|_{L^2(Q)}^2 = \sum_{j=1}^\infty  \mu_j^{-d/2-1} \| \partial_t v_j \|_{L^2(0,T)}^2 \leq C \sum_{j=1}^\infty j^{-1 -2/d} < \infty,
    \end{equation*}
    \begin{equation*}
        \| \nabla_x u \|_{L^2(Q)^d}^2 = \sum_{j=1}^\infty  \mu_j^{-d/2-1} \mu_j \| v_j \|_{L^2(0,T)}^2 \leq C \sum_{j=1}^\infty j^{-1 -2/d+2/d-2/d} < \infty
    \end{equation*}
    with a constant $C>0$ depending on $d$, $T$ and $\Omega$. So, we have that $u \in H^1(Q)$. Next, we compute formally that
    \begin{equation*}
        \square u(x,t) = \sum_{j=1}^\infty 2 \phi_j(x) \mu_j^{-d/4-1/2} \sin(\sqrt{\mu_j} t), \quad (x,t) \in Q,
    \end{equation*}
    and further,
    \begin{equation*}
        \| \square u \|_{L^2(Q)}^2 = \sum_{j=1}^\infty \mu_j^{-d/2-1} \left( 2 T-\frac{\sin \left(2 T \sqrt{\mu_j }\right)}{\sqrt{\mu_j }} \right) \leq C \sum_{j=1}^\infty j^{-1 - 2/d} < \infty
    \end{equation*}
    with a constant $C>0$ depending on $d$, $T$ and $\Omega$, where we use Weyl’s asymptotic formula~\eqref{zank:VF:Weyl}. Hence, $u \in H^1(Q)$ and $\square u \in L^2(Q)$. Moreover, as $u_M \in H^1_{0;0,}(Q;\square)$ for $M \in \mathbb{N}$, the calculations even show that $\lim_{M \to \infty} u_M = u$ with respect to $\| \cdot \|_{H^1_{0;0,}(Q;\square)}$ and thus, we have that $u \in H^1_{0;0,}(Q;\square)$.
        
    However, $\partial_{tt} u \not \in L^2(Q)$, as
    \begin{equation*}
        \| \partial_{tt} v_j \|_{L^2(0,T)}^2 =  \frac{T^3 \mu _j}{6}+\frac{1}{4} T^2 \sqrt{\mu _j} \sin \left(2 T \sqrt{\mu _j}\right) 
        -\frac{\sin \left(2 T \sqrt{\mu _j}\right)}{8 \sqrt{\mu _j}}-\frac{1}{4} T \cos \left(2 T \sqrt{\mu _j}\right)+\frac{T}{2}
    \end{equation*}
    and
    \begin{equation*}
        \|  \partial_{tt} u  \|_{L^2(Q)}^2 = \sum_{j=1}^\infty \mu_j^{-d/2-1} \| \partial_{tt} v_j \|_{L^2(0,T)}^2 \geq C \sum_{j=1}^\infty j^{-1-2/d+2/d} = C \sum_{j=1}^\infty j^{-1} = \infty
    \end{equation*}
     with $C>0$ depending on $d$, $T$ and $\Omega$, using Weyl’s asymptotic formula~\eqref{zank:VF:Weyl}.
\end{proof}

\section{Conclusions} \label{zank:sec:Zum}

In this work, we derived a variational setting for the wave equation in the second-order formulation with an isomorphism as solution operator for source terms in $L^2(Q)$. The derived solution space is contained in $H^1(Q)$ and is not a subspace of $H^2(0,T;L^2(\Omega))$. We emphasize that this existence and uniqueness result for the wave equation closes open gaps in the literature, see Theorem~\ref{zank:einf:thm:InfSupH1}.
Moreover, this variational setting is useful for the derivation of the boundary integral equations for the wave equation and regularity results for the wave equation as a counterpart to the variational formulation in \cite{SteinbachZank2022VF}. Further, the proposed variational formulation could serve as the basis for various approximation methods, such as wavelets, least-squares approaches, or boundary element methods. Last, conforming discretizations of the proposed variational formulations are also of interest and are left for future work.

\section*{Acknowledgments}
\noindent
This research was funded in part by the Austrian Science Fund (FWF) [10.55776/ESP1999624]. For open access purposes, the author has applied a CC BY public copyright license to any author accepted manuscript version arising from this submission.


\end{document}